\newcommand{\exclude}[1]{}
\def\acc#1{\left\{ #1 \right\}}
\renewcommand{\le}{\leqslant}
\renewcommand{\ge}{\geqslant}
\newtheorem{theorem}{Theorem}[section]
\newtheorem{lemma}[theorem]{Lemma}
\begin{document}

\begin{frontmatter}

\title{Avoiding Conjugacy Classes on the 5-letter Alphabet}

\author[affiliation]{Golnaz Badkobeh\fnref{fn1}}
\address[affiliation]{Goldsmiths, University of London}
\ead{g.badkobeh@gold.ac.uk}
\fntext[fn1]{Golnaz Badkobeh is supported by the Leverhulme Trust on the Leverhulme Early Career Scheme.  }

\author[lirmm]{Pascal Ochem\corref{cor}\fnref{anr}}
\address[lirmm]{LIRMM, Universit\'e de Montpellier, CNRS, Montpellier, France}
\cortext[cor]{Corresponding author}
\ead{ochem@lirmm.fr}
\fntext[anr]{This work is supported by the ANR project CoCoGro (ANR-16-CE40-0005).}

%\maketitle

\begin{abstract}
We construct an infinite word $w$ over the $5$-letter alphabet such that
for every factor $f$ of $w$ of length at least two, 
there exists a cyclic permutation of $f$ that is not a factor of $w$.
In other words, $w$ does not contain a non-trivial conjugacy class.
This proves the conjecture in Gamard et al. [TCS 2018]
\end{abstract}

\begin{keyword}
Combinatorics on words \sep Conjugacy classes% \sep graph homomorphism %\sep dichotomy theorem
\end{keyword}

\end{frontmatter}

\section{Introduction}
We consider infinite words over a finite alphabet that do not contain all the conjugates of the same word $w$,
with the necessary condition that $|w|\ge2$. A recent interest in such words appeared in the context of
pattern avoidance~\cite{circular} and of iterative algebras~\cite{BellMadill}.
Bell and Madill~\cite{BellMadill} obtained a pure morphic word with this property (and some additional properties) over the $12$-letter alphabet.
Gamard et al.~\cite{circular} independently obtained a morphic word over the $6$-letter alphabet.
They also conjectured that the alphabet size can be lowered to $5$, which is best possible.
In this paper, we prove this conjecture using a morphic word.

Together with the construction of a morphic binary word avoiding every conjugacy class of length at least $5$
and a morphic ternary word avoiding every conjugacy class of length at least $3$~\cite{circular},
this settles the topic of the smallest alphabet needed to avoid every conjugacy class of length at least $k$.

\section{Main result}
Let $\varepsilon$ denote the empty word.
We consider the morphic word $w_5=G(F^\omega(\texttt{0}))$ defined by the following morphisms.

\noindent
\begin{minipage}[b]{0.4\linewidth}
\centering
$$\begin{array}{l}
 F(\texttt{0})=\texttt{01},\\
 F(\texttt{1})=\texttt{2},\\
 F(\texttt{2})=\texttt{03},\\
 F(\texttt{3})=\texttt{24},\\
 F(\texttt{4})=\texttt{23}.
\end{array}$$
\end{minipage}
\hspace{2mm}
\begin{minipage}[b]{0.4\linewidth}
\centering
$$\begin{array}{l}
 G(\texttt{0})=\texttt{abcd},\\
 G(\texttt{1})=\varepsilon,\\
 G(\texttt{2})=\texttt{eacd},\\
 G(\texttt{3})=\texttt{becd},\\
 G(\texttt{4})=\texttt{be}. 
\end{array}$$
\end{minipage}

\begin{theorem}\label{main}
The morphic word $w_5\in\Sigma_5^*$ avoids every conjugacy class of length at least 2.
\end{theorem}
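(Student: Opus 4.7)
My plan is to combine a synchronization (desubstitution) analysis of the two morphisms with a finite base-case check and an infinite-descent step on the length of a hypothetical bad conjugacy class.

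First I would prove that $F$ and $G\circ F$ are synchronizing on $F^\omega(\texttt{0})$: any sufficiently long factor $u$ of $F^\omega(\texttt{0})$ (resp.\ of $w_5$) admits a unique decomposition $u = s\cdot F(v)\cdot t$ (resp.\ $u = s\cdot G(F(v))\cdot t$) with $v$ a factor of $F^\omega(\texttt{0})$ and $|s|,|t|$ bounded by a small constant. This should follow by inspecting the images, e.g.\ $F(\texttt{0})=\texttt{01}$ is the only image containing \texttt{1}, $F(\texttt{3})=\texttt{24}$ is the only image containing \texttt{4}, and on the $G$-side the images $\texttt{abcd}$, $\texttt{eacd}$, $\texttt{becd}$, $\texttt{be}$ end in a letter that does not begin any other image, pinning down block boundaries from a bounded window.

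Second, I would fix a threshold $L_0$ governed by the synchronization bounds, compute a long enough prefix of $w_5$, and verify by direct enumeration that for every length $\ell$ with $2 \le \ell \le L_0$, the set of length-$\ell$ factors of $w_5$ is missing at least one cyclic conjugate of each of its members. This is the finite base step.

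Third, for a hypothetical factor $f$ of $w_5$ with $|f|>L_0$ whose full conjugacy class lies in $w_5$, I would use synchronization to parse each conjugate, obtaining for each of them a preimage factor of $F^\omega(\texttt{0})$. The goal is to show that these preimages form, up to bounded boundary corrections, the full conjugacy class of some word $\tilde f$ with $|\tilde f| < |f|$, producing a strictly smaller bad conjugacy class at the $F^\omega(\texttt{0})$ level. Iterating the same synchronization for $F$ itself then drives the length below $L_0$ and contradicts the base step.

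The main obstacle will be this descent: since $G$ is erasing ($G(\texttt{1})=\varepsilon$) and non-uniform (image lengths $4,0,4,4,2$), a cyclic shift at the $w_5$ level does not map one-to-one to a cyclic shift at the $F^\omega(\texttt{0})$ level, and different conjugates of $f$ can begin in the middle of different $G$-blocks. Showing that nevertheless the induced preimage factors cover an entire conjugacy class in $F^\omega(\texttt{0})$ will require a case analysis on the starting $GF$-block of each conjugate, together with an auxiliary lemma restricting where the short block $G(\texttt{4})=\texttt{be}$ can occur relative to its neighbours so that boundary corrections never destroy the cyclic structure.
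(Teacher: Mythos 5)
Your first two steps (synchronization via the markers \texttt{01} and \texttt{ab}, plus a finite computer check on short conjugacy classes) match the paper, and your descent from $w_5$ to $F^\omega(\texttt{0})$ is also how the paper begins. But the third step has a genuine gap: the descent cannot ``drive the length below $L_0$ and contradict the base step,'' because the intermediate word $F^\omega(\texttt{0})$ really does contain complete conjugacy classes of arbitrarily large length. The paper's first lemma classifies them: they are exactly the classes of $F(\texttt{2})$, $F^2(\texttt{2})$, $F^d(\texttt{4})$ and $f^d(\texttt{0})$ for all $d\ge1$. These are fixed points of your inner descent --- desubstituting the class of $f^d(\texttt{0})$ through $f$ just yields the class of $f^{d-1}(\texttt{0})$, and the process bottoms out at small classes that are genuinely complete in $F^\omega(\texttt{0})$, not at a contradiction. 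So after your descent you are left only with the statement that any long complete class of $w_5$ equals $g(c')$ for some $c'$ in this infinite family $C$, and nothing has yet been refuted.

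The missing, and in fact central, part of the proof is to show that for each $c'\in C$ the word $w_5$ nevertheless omits some conjugate of $g(c')$. This cannot follow from synchronization alone: every conjugate of $g(c')$ that begins at a $g$-block boundary \emph{is} a factor of $w_5$, being the $g$-image of a conjugate of $c'$, all of which occur in $F^\omega(\texttt{0})$. One must exhibit, for each of the four infinite families, an explicit conjugate that starts strictly inside a $g$-block and then argue (again by a recursive parsing argument) that its occurrence in $w_5$ would force a forbidden factor of $F^\omega(\texttt{0})$ such as \texttt{232}, \texttt{32403}, \texttt{403230124} or \texttt{030120}. This is the content of the paper's last four lemmas and is where essentially all of the work lies; your proposal as written stops just before it. Your concern about $G$ being erasing is real but minor: the paper replaces $G$ by the non-erasing morphism $g=G\circ F^2$, all of whose images begin with \texttt{ab}, so the parsing of conjugates is unproblematic.
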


In order to prove this theorem, it is convenient to express $w_5$ with the larger morphisms
$f=F^3$ and $g=G\circ F^2$ given below. Clearly, $w_5=g(f^\omega(\texttt{0}))$.

\noindent
\begin{minipage}[b]{0.4\linewidth}
\centering
$$\begin{array}{l}
 f(\texttt{0})=\texttt{01203},\\
 f(\texttt{1})=\texttt{0124},\\
 f(\texttt{2})=\texttt{0120323},\\
 f(\texttt{3})=\texttt{01240324},\\
 f(\texttt{4})=\texttt{01240323}.
\end{array}$$
\end{minipage}
\hspace{2mm}
\begin{minipage}[b]{0.4\linewidth}
\centering
$$\begin{array}{l}
 g(\texttt{0})=\texttt{abcdeacd},\\
 g(\texttt{1})=\texttt{abcdbecd},\\
 g(\texttt{2})=\texttt{abcdeacdbe},\\
 g(\texttt{3})=\texttt{abcdbecdeacdbecd},\\
 g(\texttt{4})=\texttt{abcdbecdeacdbe}. 
\end{array}$$
\end{minipage}

% \noindent
% \begin{minipage}[b]{0.4\linewidth} 01234 \\ 15324 |||| 12345 \\ 03241
% \centering
% $$\begin{array}{l}
%  f(\texttt{1})=\texttt{15312},\\ 0
%  f(\texttt{2})=\texttt{15341234},\\ 3
%  f(\texttt{3})=\texttt{1531232},\\ 2
%  f(\texttt{4})=\texttt{15341232},\\ 4
%  f(\texttt{5})=\texttt{1534}. 1
% \end{array}$$
% \end{minipage}
% \hspace{2mm}
% \begin{minipage}[b]{0.4\linewidth} cadeb \\ abcde |||| abcde \\ beacd
% \centering
% $$\begin{array}{l}
%  g(\texttt{1})=\texttt{adebcdec},\\
%  g(\texttt{2})=\texttt{adeabdebcdeabdec},\\
%  g(\texttt{3})=\texttt{adebcdeabc},\\
%  g(\texttt{4})=\texttt{adeabdebcdeabc},\\
%  g(\texttt{5})=\texttt{adeabdec}. 
% \end{array}$$
% \end{minipage}

%%%%%%%%%%%%%%%%%%%%%%%%%%%%%%%%%%%%%%%%%%%%%%%%%%%%%%%%
\subsection{Avoiding conjugacy classes in $F^\omega(\texttt{0})$}
Here we study the pure morphic word and the conjugacy classes it contains.

% \begin{lemma} The infinite word $f^\omega(1)$ contains only the conjugacy classes listed in $C$.
% $C=\acc{12,1534,f^d(23),f^d(1234),f^d(f(1)),f^d(f(4))}$ for all $d\ge0$.
% \end{lemma}

\begin{lemma} The infinite word $F^\omega(\texttt{0})$ contains only the conjugacy classes listed in
$C=\acc{F(\texttt{2}),F^2(\texttt{2}),F^d(\texttt{4}),f^d(\texttt{0})}$, for all $d\ge1$.
\end{lemma}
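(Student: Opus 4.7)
The plan is to prove both inclusions. For the ``$\supseteq$'' direction, I would show that each conjugacy class listed in $C$ appears in full. The two finite cases, $F(\texttt{2})=\texttt{03}$ and $F^2(\texttt{2})=\texttt{0124}$, are handled by locating each cyclic rotation explicitly in a sufficiently long prefix of $F^\omega(\texttt{0})$. For the infinite families $\{F^d(\texttt{4})\}_{d\ge 1}$ and $\{f^d(\texttt{0})\}_{d\ge 1}$, I would proceed by induction on $d$. The inductive step relies on the structural observation that any cyclic rotation of $F(u)$ decomposes as $s\,F(u')\,p$ where $u'$ is itself a cyclic rotation of $u$ and $|s|,|p|\le 1$. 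Hence, if a particular rotation $u'$ of $u$ sits inside an appropriate context in $F^\omega(\texttt{0})$, its image under $F$ already contains the desired rotation of $F(u)$; checking that the required border letters are indeed available reduces to a small finite verification at the base of the induction.

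\textbf{Reverse inclusion.}
For the harder ``$\subseteq$'' direction, I would argue by induction on $|w|$. Suppose $w$ is a factor of $F^\omega(\texttt{0})$ with $|w|\ge 2$ whose full conjugacy class appears in $F^\omega(\texttt{0})$. A synchronization lemma for $F$ allows every sufficiently long factor to be written as $s\,F(u)\,p$ with $|s|,|p|\le 1$ and a uniquely determined ancestor $u$. Applying this to each cyclic rotation of $w$ produces a family of ancestors that, after accounting for the short borders, forms the full conjugacy class of some shorter word $u^\star$. By induction, $u^\star$ belongs to $C$; then a case analysis of how $F$ acts on each element of $C$ (together with the finitely many possible boundary letters that can be attached on either side) shows that $w$ must itself be of the form $F(\texttt{2})$, $F^2(\texttt{2})$, $F^d(\texttt{4})$, or $f^d(\texttt{0})$.

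\textbf{Main obstacle.}
The principal difficulty is the non-uniformity of $F$: the image $F(\texttt{1})=\texttt{2}$ has length one while all other images have length two, so the $F$-desubstitution of a factor is not literally unique. A letter \texttt{2} appearing in $F^\omega(\texttt{0})$ might be produced by $F(\texttt{1})$, or it might sit inside $F(\texttt{3})=\texttt{24}$ or $F(\texttt{4})=\texttt{23}$, and the correct interpretation has to be read off the immediate neighbors of that letter. I would resolve this by a finite case split keyed on the letter preceding and following each \texttt{2}. A secondary complication is that during the descent one must cleanly separate the ``$F^d(\texttt{4})$-family'' from the ``$f^d(\texttt{0})$-family'', which I would track via the depth index and letter counts of the ancestor. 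The induction bottoms out at factors of small bounded length, which are dispatched by direct enumeration.
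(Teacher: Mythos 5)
Your overall strategy (desubstitute, argue that a complete conjugacy class forces a complete class one level down, bottom out in a finite check) is the same as the paper's, but the descent step as you state it is false, and the counterexample is exactly the difficulty you flag without resolving. Because $F(\texttt{1})=\texttt{2}$ has length one, a word $F(u)$ can be a factor of $F^\omega(\texttt{0})$ without $u$ being a factor: $F(\texttt{201})=\texttt{03}\cdot\texttt{01}\cdot\texttt{2}=\texttt{03012}$ occurs in $F^\omega(\texttt{0})$ (inside $\texttt{030124}$, where the final \texttt{2} is the first letter of $F(\texttt{3})=\texttt{24}$, not the block $F(\texttt{1})$), yet $\texttt{201}$ is not a factor since every \texttt{1} is followed by \texttt{2}. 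Consequently, for $w=f(\texttt{0})=\texttt{01203}$, whose conjugacy class is complete, the ancestors of its five rotations are $\texttt{012},\texttt{12},\texttt{120},\texttt{20},\texttt{01}$ and \emph{no} word of length at least $2$ has its full conjugacy class among them ($\texttt{201}$, $\texttt{21}$, $\texttt{02}$, $\texttt{10}$ are all non-factors). So the claim that the ancestors ``form the full conjugacy class of some shorter word $u^\star$'' fails, and it fails at every scale: $f^d(\texttt{0})=F\bigl(F^{3d-1}(\texttt{0})\bigr)$ is complete while the class of $F^{3d-1}(\texttt{0})$ is not, so your induction on $|w|$ cannot close for this family. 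A neighbour-based case split disambiguates the parsing of interior letters \texttt{2}, but it does not repair this boundary phenomenon, which is about whether a rotation of $w$ is \emph{exactly} block-aligned in its actual occurrence.

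The paper sidesteps this by descending with $f=F^3$ rather than $F$: every $f$-image begins with the marker \texttt{01}, and \texttt{01} occurs only at $f$-block starts, so a factor containing at least two occurrences of \texttt{01} that begins at one of them is forced to be a genuine $f$-image of a factor. Accordingly, the paper measures progress not by length but by the \emph{index} (number of occurrences of \texttt{1}), performs the descent only when the index is at least $2$, and determines by computer the finitely many complete classes of index at most $1$ — which is precisely where the problematic words such as $f(\texttt{0})$ and $f(\texttt{4})$ live. To repair your argument you would need either to switch to such a marked power of $F$, or to prove a genuine circular synchronization statement for $F$ that rules out the misaligned occurrences; the latter is exactly what the example above shows you cannot have. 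A minor further point: the lemma only asserts the ``contains only'' direction, so your ``$\supseteq$'' part, while harmless, is not needed (and the paper does not prove it).
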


\begin{proof}
Notice that the factor \texttt{01} only occurs as the prefix of the $f$-image of every letter in $F^\omega(\texttt{0})$.
Moreover, every letter \texttt{1} only occurs in $F^\omega(\texttt{0})$ as the suffix of the factor \texttt{01}.
Let us say that the \emph{index} of a conjugacy class is the number of occurrences of \texttt{1} in any of its elements.
An easy computation shows that the set of complete conjugacy classes in $F^\omega(\texttt{0})$
with index at most one is $C_1=\acc{F(\texttt{2}),F^2(\texttt{2}),F(\texttt{4}),F^2(\texttt{4}),f(\texttt{4}),f(\texttt{0})}$.
%  F(\texttt{0})=\texttt{01},\\ 
%  F(\texttt{1})=\texttt{2},\\
%  F(\texttt{2})=\texttt{03},\\
%  F(\texttt{3})=\texttt{24},\\
%  F(\texttt{4})=\texttt{23}.
Let us assume that $F^\omega(\texttt{0})$ contains a conjugacy class $c$ with index at least two.
Let $w\in c$ be such that \texttt{01} is a prefix of $w$.
We write $w=ps$ such that the leftmost occurrence of $\texttt{01}$ in $w$ is the prefix of $s$.
Then the conjugate $sp$ of $w$ also belongs to $c$ and thus is a factor of $F^\omega(\texttt{0})$.
This implies that the pre-image $v=f^{-1}(w)$ is a factor of $F^\omega(\texttt{0})$, and so does every conjugate of $v$.
Thus, $F^\omega(\texttt{0})$ contains a conjugacy class $c'$ such that the elements of $c$ with prefix $\texttt{01}$ are the $f$-images
of the elements of $c'$. Moreover, the index of $c'$ is strictly smaller than the index of $c$.

% Let us assume that a factor $w$ has at least two occurrences of \texttt{1} and every conjugate of $w$ is a factor of $F^\omega(\texttt{0})$.
% then two of its occurrences can be used as separators to mark images of letters.
% Let us write $w=pvs$ such that the leftmost occurrence of \texttt{153} is the prefix of $v$
% and the rightmost occurrence of \texttt{153} is the prefix of $s$.
% Then the conjugate $spv$ of $w$ is also a factor of $f^\omega(\texttt{0})$.
% 
% Because $s$ and $v$ start with \texttt{153} then $sp$ is the image of one or two letters and can be decoded uniquely, so does $v$. 
% Notice that the conjugacy class of the pre-image $f^{-1}(spv)$ must also be complete in $f^\omega(\texttt{0})$.
Using this argument recursively, we conclude that every complete conjugacy class in $F^\omega(\texttt{0})$
has a member of the form $f^i(x)$ such that $x$ is an element of a conjugacy class in $C_1$.

Now we show that $F(\texttt{2})$ does not generate larger conjugacy classes in $F^\omega(\texttt{0})$.
We thus have to exhibit a conjugate of $f(F(\texttt{2}))=F^4(\texttt{2})=\texttt{0120301240324}$ that is
not a factor of $F^\omega(\texttt{0})$. A computer check shows that the conjugate $\texttt{4012030124032}$
is not a factor of $F^\omega(\texttt{0})$.
% must be parsed as $\texttt{4}F(012032)\texttt{2}$
Similarly, $F^2(\texttt{2})$ does not generate larger conjugacy classes in $F^\omega(\texttt{0})$
since the conjugate $\texttt{301203012401203230124032}$ of $f(F^2(\texttt{2}))=F^5(\texttt{2})=\texttt{012030124012032301240323}$
is not a factor of $F^\omega(\texttt{0})$.

\end{proof}
%%%%%%%%%%%%%%%%%%%%%%%%%%%%%%%%%%%%%%%%%%%%%%
\subsection{Avoiding conjugacy classes in $w_5$}

We are ready to prove Theorem~\ref{main}.
Notice that $\texttt{ab}$ only appears in $w_5$ as the prefix of the $g$-image of every letter.
A computer check shows that $w_5$ avoids every conjugacy class of length at most $100$.
Consider a word $w$ with length at least $101$ whose conjugacy class is complete in $w_5$.
So $w$ contains at least two occurrences of $\texttt{ab}$.

Let us assume that $w_5$ contains a conjugacy class $c$.
Let $w\in c$ be such that $\texttt{ab}$ is a prefix of $w$.
Since $|w|\ge101$, $w$ contains at least $2$ occurrences of $\texttt{ab}$ and
we write $w=ps$ such that the leftmost occurrence of $\texttt{ab}$ in $w$ is the prefix of $s$.
Then the conjugate $sp$ of $w$ also belongs to $c$ and thus is a factor of $w_5$.
This implies that the pre-image $v=g^{-1}(w)$ is a factor of $F^\omega(\texttt{0})$, and so does every conjugate of $v$.
Thus, $F^\omega(\texttt{0})$ contains a conjugacy class $c'$ such that the elements of $c$ with prefix $\texttt{ab}$ are the $f$-images
of the elements of $c'$.

To finish the proof, it is thus sufficient to show that for every $c'\in C$, there exists a conjugate of $g(c')$ that is not a factor of $w_5$.
Recall that $C=\acc{F(\texttt{2}),F^2(\texttt{2}),F^d(\texttt{4}),f^d(\texttt{0})}$ for all $d\ge1$.
The computer check mentioned above settles the case of $F(\texttt{2})$ and $F^2(\texttt{2})$ since $|g(F(\texttt{2}))|<|g(F^2(\texttt{2}))|=40\le100$.

The next four lemmas handle the remaining cases:
\begin{itemize}
 \item $g(f^d(F(\texttt{4})))=g(f^d(\texttt{23}))$
 \item $g(f^d(F^2(\texttt{4})))=g(f^d(\texttt{0324}))$
 \item $g(f^{d+1}(\texttt{4}))=g(f^d(\texttt{01240323}))$
 \item $g(f^{d+1}(\texttt{0}))=g(f^d(\texttt{01203}))$
\end{itemize}

\begin{lemma}
Let $p_{\texttt{23}}=\texttt{e}.g(\texttt{3}f(\texttt{3})\ldots f^{d-1}(\texttt{3}).f^d(\texttt{3}))$ and\\
$s_{\texttt{23}}=g(f^{d-1}(\texttt{01203}).f^{d-2}(\texttt{01203})\ldots f(\texttt{01203})\texttt{01203}).\texttt{abcdeacdb}$.
For every $d\ge0$, the word $T_{\texttt{23}}=p_{\texttt{23}}s_{\texttt{23}}$ is a conjugate of $g(f^d(\texttt{23}))$ that is not a factor of $w_5$.
\end{lemma}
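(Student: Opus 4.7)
The plan is in two phases: first I would verify by explicit rewriting that $T_{\texttt{23}}$ is a cyclic conjugate of $g(f^d(\texttt{23}))$, and then I would derive a contradiction from assuming $T_{\texttt{23}}$ is a factor of $w_5$ by unwinding the $g$- and $f$-factorizations. The case $d = 0$ (where $|T_{\texttt{23}}| = 26$) is handled by the computer check at the start of the section, so the argument below targets $d \ge 1$.

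For the conjugacy, the key identity is $f(\texttt{2}) = \texttt{01203} \cdot \texttt{23}$. Iterating it, one obtains
\[
  f^d(\texttt{23}) = f^{d-1}(\texttt{01203}) \cdots f(\texttt{01203}) \cdot \texttt{01203} \cdot \texttt{23} \cdot f(\texttt{3}) \cdots f^d(\texttt{3}).
\]
Writing $g(\texttt{2}) = \texttt{abcdeacdb} \cdot \texttt{e}$ and applying $g$ decomposes $g(f^d(\texttt{23}))$ as $A \cdot B$ with $A$ ending in $\texttt{abcdeacdb}$ and $B$ starting with $\texttt{e}$; the rotation $B \cdot A$ is exactly $p_{\texttt{23}} s_{\texttt{23}} = T_{\texttt{23}}$.

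Now assume $T_{\texttt{23}}$ is a factor of $w_5$. Since every $g$-image starts with $\texttt{ab}$ and $\texttt{ab}$ occurs nowhere else in $w_5$, the $g$-factorization of $w_5$ induces a unique alignment on $T_{\texttt{23}}$. The terminal nine characters $\texttt{abcdeacdb}$ match the length-$9$ prefix of $g(\texttt{x})$ only for $\texttt{x} = \texttt{2}$, and the initial $\texttt{e}$ terminates a $g$-image ending in $\texttt{e}$, hence $g(\texttt{2})$ or $g(\texttt{4})$. Because the $g$-image immediately after the initial $\texttt{e}$ is $g(\texttt{3})$ and $\texttt{43}$ is not a factor of $F^\omega(\texttt{0})$ (every $\texttt{4}$ is followed by $\texttt{0}$ or $\texttt{2}$), the preceding preimage letter must be $\texttt{2}$. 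Letting $u := \texttt{3} \cdot f(\texttt{3}) \cdots f^d(\texttt{3}) \cdot f^{d-1}(\texttt{01203}) \cdots \texttt{01203}$ denote the $g$-preimage of the middle portion, the word $\texttt{2} \cdot u \cdot \texttt{2}$ must be a factor of $F^\omega(\texttt{0})$.

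The hard part is ruling out $\texttt{2} u \texttt{2}$ in $f^\omega(\texttt{0}) = F^\omega(\texttt{0})$. Using $f^k(\texttt{01203}) = f^{k+1}(\texttt{0})$, I would rewrite
\[
  u = \texttt{3} \cdot f(\texttt{3}) f^2(\texttt{3}) \cdots f^d(\texttt{3}) \cdot f^d(\texttt{0}) f^{d-1}(\texttt{0}) \cdots f(\texttt{0}),
\]
so that $u'$, the suffix of $u$ obtained by dropping the leading $\texttt{3}$, is a concatenation of complete single-letter $f$-images. Next I would observe that $\texttt{23}$ occurs in $f^\omega(\texttt{0})$ only inside $f(\texttt{2})$ at its last two positions, or inside $f(\texttt{4})$ at its last two positions, because no $f$-image terminates in $\texttt{2}$. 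In the $f(\texttt{4})$ case the next preimage letter would have to be $\texttt{3}$ (so the following $f$-image matches $f(\texttt{3})$, the first block of $u'$), which yields the forbidden factor $\texttt{43}$. In the $f(\texttt{2})$ case $u'$ must occupy a sequence of complete $f$-images starting at the $f$-boundary after $f(\texttt{2})$, so it also ends at an $f$-boundary; the trailing $\texttt{2}$ of $\texttt{2} u \texttt{2}$ must then be the first character of the next $f$-image, impossible since every $f$-image begins with $\texttt{0}$. This completes the contradiction.
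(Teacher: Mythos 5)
Your conjugacy verification and the first half of the non-factor argument are sound and essentially match the paper: you correctly show that the initial $\texttt{e}$ must close $g(\texttt{2})$ and the final $\texttt{abcdeacdb}$ must open $g(\texttt{2})$, so that $\texttt{2}u\texttt{2}$ must be a factor of $F^\omega(\texttt{0})$, where $u=\texttt{3}f(\texttt{3})\cdots f^d(\texttt{3})f^d(\texttt{0})\cdots f(\texttt{0})$. Your treatment of the left end of $\texttt{2}u\texttt{2}$ (ruling out $f(\texttt{4})$ via the forbidden factor $\texttt{43}$) is also correct.

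The gap is in your last sentence. From ``$u'$ starts at an $f$-boundary and $u'=f(v)$'' you conclude that $u'$ \emph{ends} at an $f$-boundary, but this does not follow. The synchronizing marker $\texttt{01}$ only tells you that the actual $f$-images inside the occurrence begin exactly at the starts of your blocks; it does not tell you that the \emph{last} block is matched by an image of the same length. Concretely, the last block of $u'$ is $f(\texttt{0})=\texttt{01203}$, which is a proper prefix of $f(\texttt{2})=\texttt{0120323}$, and the character following $u'$ in $\texttt{2}u\texttt{2}$ is $\texttt{2}$ --- precisely the sixth character of $f(\texttt{2})$. So the actual image covering the end of $u'$ may be $f(\texttt{2})$, the trailing $\texttt{2}$ is then internal to that image rather than the first letter of a new one, and no contradiction arises at this level. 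What this case actually yields is that the $f$-preimage of the configuration is $\texttt{2}u_{d-1}\texttt{2}$, where $u_{d-1}$ has the same shape with $d$ replaced by $d-1$; one must iterate this descent $d$ times until reaching $\texttt{2}u_0\texttt{2}=\texttt{232}$, which is not a factor of $F^\omega(\texttt{0})$. This unavoidable recursion is exactly what the paper's proof performs (its ``using this argument recursively'' steps, culminating in the observation that $T_{\texttt{23}}$ would force $\texttt{232}$ to be a factor), and it is the ingredient your single-level shortcut is missing.
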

\begin{proof}
Let us assume that $T_{\texttt{23}}$ appears in $w_5$.\\
% 15324 \\ 12345 \\ cadeb \\ abcde
% 01234 \\ 03241 \\ abcde \\ beacd
\noindent
The letter \texttt{3} in $f^\omega(\texttt{0})$ appears after either \texttt{0} or \texttt{2}.
However \texttt{e} is a suffix of $g(\texttt{2})$ and not of $g(\texttt{0})$. 
Therefore, $\texttt{e}.g(\texttt{3})$ is a suffix of $g(\texttt{23})$ only.
Since \texttt{23} is a suffix of $f(\texttt{2})$ and not of $f(\texttt{0})$, then $g(\texttt{23}f(\texttt{3}))$ is a suffix of $g(f(\texttt{23}))$ only.
Using this argument recursively, $p_{\texttt{23}}$ is a suffix of $g(f^d(\texttt{23}))$ only.

Now, the letter \texttt{3} in $f^\omega(\texttt{0})$ appears before either \texttt{0} or \texttt{2},
however \texttt{abcdeacdb} is a prefix of $g(\texttt{2})$ and not of $g(\texttt{0})$.
Thus $g(\texttt{01203}).\texttt{abcdeacdb}$ is a prefix of $g(\texttt{012032})$ only.
Since \texttt{012032} is a prefix of $f(\texttt{2})$ and not of $f(\texttt{0})$, then $g(f(\texttt{01203})\texttt{012032})$ is a prefix of $g(f(\texttt{012032}))$ only.
Using this argument recursively, $s_{\texttt{23}}$ is a prefix of $g(f^{d-1}(\texttt{012032}))$ only.
Thus $T_{\texttt{23}}$ is a factor of $g(f^d(\texttt{232}))$ but \texttt{232} is not a factor of $f^\omega(\texttt{0})$, contradiction.
\end{proof}
%%%%%%%%%%%%%%%%%%%%%%%%%%2341%%%%%%%%%%%%%%%%%%%%
\begin{lemma}
Let $p_{\texttt{0324}}=\texttt{acdbecd}.g(\texttt{24}f(\texttt{24})\ldots f^{d-1}(\texttt{24})).f^d(\texttt{24}))$
and $s_{\texttt{0324}}= g(f^d(\texttt{0}).f^{d-1}(\texttt{01240})\ldots f(\texttt{01240}).\texttt{01240}).\texttt{abcdbecde}$.
For every $d\ge0$, the word $T_{\texttt{0324}}=p_{\texttt{0324}}g(f^d(\texttt{0}))s_{\texttt{0324}}$ is a conjugate of $g(f^d(\texttt{0324}))$ that is not a factor of $w_5$.
\end{lemma}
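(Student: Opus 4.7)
The plan is to mirror the structure of the proof of the previous lemma. Assume for contradiction that $T_{\texttt{0324}}$ is a factor of $w_5$. The overall strategy has three steps: first show that the prefix $p_{\texttt{0324}}$ can occur in $w_5$ only as a suffix of $g(f^d(\texttt{324}))$; second, show that the suffix $s_{\texttt{0324}}$ can occur in $w_5$ only as a prefix of $g(f^d(\texttt{01240}X))$ for some $X\in\{\texttt{3},\texttt{4}\}$; and finally, combine these with the middle block $g(f^d(\texttt{0}))$ to force $\texttt{3240}X$ to be a factor of $f^\omega(\texttt{0})$, from which a direct inspection yields a contradiction.

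For the first step, the initial segment $\texttt{acdbecd}$ is the $7$-letter suffix of $g(\texttt{3})$, and a direct check on the five $g$-images (both within each image and across consecutive images) shows that it can appear in $w_5$ only in this role. The letter $\texttt{3}$ in $f^\omega(\texttt{0})$ is followed by $\texttt{0}$ or $\texttt{2}$; since $g(\texttt{0})$ and $g(\texttt{2})$ agree on their first $8$ letters but differ at position $9$, and the continuation of $p_{\texttt{0324}}$ after $\texttt{acdbecd}$ begins with $g(\texttt{2})g(\texttt{4})$, the succeeding letter in the pre-image is forced to be $\texttt{2}$. Iterating this argument at each of the $d$ levels of $f$---using that $\texttt{24}$ is a suffix of $f(\texttt{3})$ only, and that $f(\texttt{2})$ and $f(\texttt{0})$ are distinguishable---yields the claim exactly in the pattern of the previous lemma.

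For the second step, the tail $\texttt{abcdbecde}$ is the common $9$-letter prefix of $g(\texttt{3})$ and $g(\texttt{4})$, so the partial $g$-image it initiates has pre-image in $\{\texttt{3},\texttt{4}\}$. Working backwards along the $g$-image factorization and iterating in the same spirit as the first step, $s_{\texttt{0324}}$ forces the pre-image to be a prefix of $f^d(\texttt{3})$ or of $f^d(\texttt{4})$ of the appropriate length. Combining the three parts, the full pre-image of $T_{\texttt{0324}}$ would correspond to the meta-sequence $\texttt{3240}X$ being a factor of $f^\omega(\texttt{0})$, with $X\in\{\texttt{3},\texttt{4}\}$. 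But $\texttt{324}$ occurs in $f^\omega(\texttt{0})$ only as the suffix of some $f(\texttt{3})$-block (it is not a factor of any other $f$-image, nor can it span a boundary, since $\texttt{3}$ at the end of an $f$-image is always followed by $\texttt{0}$), so the two letters immediately following any occurrence of $\texttt{324}$ are the first two letters of the next $f$-image, namely $\texttt{01}$. Hence $\texttt{3240}X$ is not a factor of $f^\omega(\texttt{0})$ for any $X\in\{\texttt{3},\texttt{4}\}$, contradicting the assumption.

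The main obstacle is the synchronization analysis behind the first two steps: one must rule out occurrences of $\texttt{acdbecd}$ and $\texttt{abcdbecde}$ that cross $g$-image boundaries, and handle the slight asymmetry that $\texttt{abcdbecde}$ admits two candidate successor letters whereas $\texttt{acdbecd}$ admits only one. Once these synchronization facts are pinned down, the inductive lift through $f^d$ is a routine iteration paralleling the argument already carried out for $T_{\texttt{23}}$.
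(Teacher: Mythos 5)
Your proposal is correct and follows essentially the same route as the paper: synchronize the prefix \texttt{acdbecd} to the suffix of a $g(\texttt{3})$-block, synchronize the tail \texttt{abcdbecde} to the start of a $g$-image, lift through the $d$ levels of $f$, and derive a forbidden factor of $f^\omega(\texttt{0})$ at the top level. Two small remarks: in your iteration for the prefix, the distinguishing fact is that \texttt{324} is a suffix of $f(\texttt{3})$ but not of $f(\texttt{1})$ (the two possible predecessors of \texttt{2}) --- the word \texttt{24} alone does not work, since it is also a suffix of $f(\texttt{1})$; and where you carry both candidates $X\in\{\texttt{3},\texttt{4}\}$ for the tail, the paper instead uses that \texttt{0} is followed only by \texttt{1} or \texttt{3} to force $X=\texttt{3}$ outright, though your version still closes since \texttt{324} is always followed by \texttt{01}, ruling out both \texttt{32403} and \texttt{32404}.
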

\begin{proof} 
Let us assume that $T_{\texttt{0324}}$ appears in $w_5$.\\
% 15324 \\ 12345 \\ cadeb \\ abcde
% 01234 \\ 03241 \\ abcde \\ beacd
\noindent
The letter \texttt{2} in $f^\omega(\texttt{0})$ appears after either \texttt{1} or \texttt{3}.
However \texttt{acdbecd} is a suffix of $g(\texttt{3})$ and not of $g(\texttt{1})$. 
Therefore $\texttt{acdbecd}.g(\texttt{24})$ is a suffix of $g(\texttt{324})$ only.
Since \texttt{324} is a suffix of $f(\texttt{3})$ and not of $f(\texttt{1})$, then $g(\texttt{324}f(\texttt{24}))$ is a suffix of $g(f(\texttt{324}))$ only.
Using this argument recursively, $p_{\texttt{3240}}$ is a suffix of $g(f^d(\texttt{324}))$ only.

Now, the letter \texttt{0} in $f^\omega(\texttt{0})$ appears before either \texttt{1} or \texttt{3}.
However \texttt{abcdbecde} is a prefix of $g(\texttt{3})$ and not of $g(\texttt{1})$.
Thus $g(\texttt{01240}).\texttt{abcdbecde}$ is a prefix of $g(\texttt{012403})$ only.
Since \texttt{012403} is a prefix of $f(\texttt{3})$ and not of $f(\texttt{1})$, then $g(f(\texttt{01240})\texttt{012403})$ is a prefix of $g(f(\texttt{012403}))$ only.
Using this argument recursively, $s_{\texttt{3240}}$ is a prefix of $g(f^{d-1}(\texttt{012403}))$ only.
Thus $T_{\texttt{3240}}$ is a factor of $g(f^d(\texttt{32403}))$ but \texttt{32403} is not a factor of $f^\omega(\texttt{0})$, contradiction.
\end{proof}
%%%%%%%%%%%%%%%%%%%%%%%%%%f(15341232)%%%%%%%%%%%%%%%%%%%%
\begin{lemma}
Let $p_{\texttt{01240323}}=\texttt{ecdeacdbe}.g(\texttt{0323}f(\texttt{0323})\cdots f^{d-1}(\texttt{0323}).f^d(\texttt{0323}))$ and
$s_{\texttt{01240323}}=g(f^d(\texttt{012})f^{d-1}(\texttt{012}). \cdots f(\texttt{012})\texttt{012}).\texttt{abcdb}$.
For every $d\ge0$, the word $T_{\texttt{01240323}}=p_{\texttt{01240323}}s_{\texttt{01240323}}$ is a conjugate of $g(f^d(\texttt{0323})$ that is not a factor of $w_5$.
\end{lemma}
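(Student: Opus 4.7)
The plan is to assume $T_{\texttt{01240323}}$ appears in $w_5$, identify the unique source factor in $f^\omega(\texttt{0})$ that must underlie it, and then show by an iterative $f$-un-parsing that this source factor cannot exist.

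I would start by analyzing the prefix of $p_{\texttt{01240323}}$. Since \texttt{ab} marks a $g$-image boundary in $w_5$ and does not occur inside \texttt{ecdeacdbe}, the initial \texttt{ecdeacdbe} must be the last 9 letters of a single $g$-image; checking the five images shows the only match is $g(\texttt{4})$. Following the pattern of the previous lemmas (the letter \texttt{0} in $f^\omega(\texttt{0})$ can be preceded by \texttt{4} only via $f(\texttt{4})$, since \texttt{40323} is a suffix of $f(\texttt{4})$ and cannot span two $f$-images nor appear inside any other $f$-image), the iterative $f$-level argument gives that $p_{\texttt{01240323}}$ is a suffix of $g(f^d(\texttt{40323}))$ only. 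Consequently the source prefix of $T_{\texttt{01240323}}$ is $\texttt{4}\cdot B_{d+1}$, where $B_{d+1}=\texttt{0323}\cdot f(\texttt{0323})\cdots f^d(\texttt{0323})$.

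For the suffix of $s_{\texttt{01240323}}$, the terminal \texttt{abcdb} is the first 5 letters of some $g$-image, so the source ends with a letter $X\in\{\texttt{1},\texttt{3},\texttt{4}\}$. The $g$-image just before this partial image is $g(\texttt{2})$, and in $f^\omega(\texttt{0})$ the letter \texttt{2} is followed only by \texttt{0}, \texttt{3}, or \texttt{4}, so $X\neq\texttt{1}$. Moreover, \texttt{012} occurs in $f^\omega(\texttt{0})$ only as the prefix of an $f$-image, whose fourth letter is $\texttt{0}$ or $\texttt{4}$; hence \texttt{0123} is not a factor of $f^\omega(\texttt{0})$, eliminating $X=\texttt{3}$. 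Therefore $X=\texttt{4}$, and the source ends with $A_{d+1}\cdot\texttt{4}$, where $A_{d+1}=f^d(\texttt{012})\cdots\texttt{012}$. Combining, the source factor in $f^\omega(\texttt{0})$ must be $\texttt{4}\cdot B_{d+1}\cdot A_{d+1}\cdot\texttt{4}$.

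The main step is to show this factor cannot occur in $f^\omega(\texttt{0})$. Using the recursions $B_{k+1}=\texttt{0323}\cdot f(B_k)$ and $A_{k+1}=f(A_k)\cdot\texttt{012}$ with $B_0=A_0=\varepsilon$, the source rewrites as $\texttt{40323}\cdot f(B_d\cdot A_d)\cdot\texttt{0124}$. The prefix \texttt{40323} is uniquely the suffix of $f(\texttt{4})$, the middle $f(B_d\cdot A_d)$ splits unambiguously into full $f$-images, and the terminal \texttt{0124} is either $f(\texttt{1})$, the prefix of $f(\texttt{3})$, or the prefix of $f(\texttt{4})$. Going up one $f$-level, the upper source must be $\texttt{4}\cdot B_d\cdot A_d\cdot Y$ with $Y\in\{\texttt{1},\texttt{3},\texttt{4}\}$; the cases $Y\in\{\texttt{1},\texttt{3}\}$ force the factor to end in \texttt{0121} or \texttt{0123}, both impossible by the earlier observations, so $Y=\texttt{4}$ and the same structure reappears one level shallower. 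Iterating $d+1$ times, the top-level source reduces to $\texttt{4}Y$ with $Y\in\{\texttt{1},\texttt{3},\texttt{4}\}$; since every \texttt{4} in $f^\omega(\texttt{0})$ is immediately followed by $\texttt{0}$, none of $\texttt{41},\texttt{43},\texttt{44}$ is a factor of $f^\omega(\texttt{0})$, a contradiction.

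The main obstacle is making the un-$f$ recursion airtight at each level — in particular, verifying that at every step the unique non-contradictory continuation is $Y=\texttt{4}$, so the recursion must descend all the way to a two-letter impossible source, and handling the base case $d=0$ uniformly with the inductive step.
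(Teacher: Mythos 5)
Your proposal is correct and follows essentially the same strategy as the paper: use the marker \texttt{ab} (and the suffix \texttt{ecdeacdbe} of $g(\texttt{4})$, the prefix \texttt{abcdb} of $g(\texttt{1}),g(\texttt{3}),g(\texttt{4})$) to un-parse $T_{\texttt{01240323}}$ through $g$, then iteratively un-parse through $f$ via the \texttt{01}-boundary property until a short forbidden factor of $F^\omega(\texttt{0})$ is reached. The only difference is cosmetic: the paper stops the reduction at the single word \texttt{403230124} and asserts it is not a factor, whereas you continue the de-substitution down to $\texttt{4}Y$ with $Y\in\{\texttt{1},\texttt{3},\texttt{4}\}$ and finish with the observation that every \texttt{4} is followed by \texttt{0} — a slightly more self-contained ending of the same argument.
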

% 15324 \\ 12345 \\ cadeb \\ abcde
% 01234 \\ 03241 \\ abcde \\ beacd
\begin{proof}
Let us assume that $T_{\texttt{01240323}}$ appears in $w_5$.\\
\noindent
The factor \texttt{03} in $f^\omega(\texttt{0})$ appears after either \texttt{2} or \texttt{4}.
However \texttt{ecdeacdbe} is a suffix of $g(\texttt{4})$ and not of $g(\texttt{2})$. 
Therefore $\texttt{ecdeacdbe}.g(\texttt{0323})$ is a suffix of $g(\texttt{40323})$ only.
Since \texttt{40323} is a suffix of $f(\texttt{4})$ and not of $f(\texttt{2})$, then $g(\texttt{40323}f(\texttt{0323}))$ is a suffix of $g(f(\texttt{40323}))$,
using this argument recursively, $p_{\texttt{01240323}}$ is a suffix of $g(f^d(\texttt{40323}))$ only.

Now, the factor \texttt{12} in $f^\omega(\texttt{0})$ appears before either \texttt{0} or \texttt{4}.
However \texttt{abcdb} is a prefix of $g(\texttt{4})$ and not of $g(\texttt{0})$.
Thus $g(\texttt{012}).\texttt{abcdb}$ must only be a prefix of $g(\texttt{0124})$ and since \texttt{0323} is a prefix of $f(\texttt{4})$
and not of $f(\texttt{0})$ then $g(f(\texttt{012})\texttt{0124})$ is a prefix of $g(f(\texttt{0124}))$ only.
Using this argument recursively, $s_{\texttt{01240323}}$ is a prefix of $g(f^d(\texttt{0124}))$ only.
Thus $T_{\texttt{01240323}}$ is a factor of $g(f^d(\texttt{403230124}))$ but \texttt{403230124} is not a factor of $f^\omega(\texttt{0})$, contradiction.
\end{proof}
%%%%%%%%%%%%%%%%%%%%%%%%%%f(15312)%%%%%%%%%%%%%%%%%%%%
\begin{lemma}
Let $p_{\texttt{01203}}=\texttt{d}.g(\texttt{3}f(\texttt{3})\ldots f^{d-1}(\texttt{3}).f^d(\texttt{3}))$ and\\
$s_{\texttt{01203}}=g(f^d(\texttt{012})f^{d-1}(\texttt{012}).f^{d-2}(\texttt{012})\ldots f(\texttt{012})\texttt{012}).\texttt{abcdeac}$.
For every $d\ge0$, the word $T_{\texttt{01203}}=p_{\texttt{01203}}s_{\texttt{01203}}$ is a conjugate of $g(f^d(\texttt{01203}))$ that is not a factor of $w_5$.
\end{lemma}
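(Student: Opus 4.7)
The plan is to mirror the template of the three preceding lemmas: suppose $T_{\texttt{01203}}$ is a factor of $w_5$, reconstruct from $p_{\texttt{01203}}$ and $s_{\texttt{01203}}$ the unique parent factor of $f^\omega(\texttt{0})$ that could map to $T_{\texttt{01203}}$ under $g$, and then exhibit a short word which this parent must contain but which is not a factor of $f^\omega(\texttt{0})$.

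For the left part, I would analyse $p_{\texttt{01203}}$. The letter \texttt{3} in $f^\omega(\texttt{0})$ is preceded only by \texttt{0} or \texttt{2}; since $g(\texttt{0})=\texttt{abcdeacd}$ ends in \texttt{d} whereas $g(\texttt{2})=\texttt{abcdeacdbe}$ ends in \texttt{e}, the initial $\texttt{d} \cdot g(\texttt{3})$ is a suffix of $g(\texttt{03})$ only. Since \texttt{03} is a suffix of $f(\texttt{0})$ but not of $f(\texttt{2})$ (which ends in \texttt{23}), the same reasoning at the next level gives $g(\texttt{03} \cdot f(\texttt{3}))$ as a suffix of $g(f(\texttt{03}))$ only, and iterating exactly as in the previous lemmas yields $p_{\texttt{01203}}$ as a suffix of $g(f^d(\texttt{03}))$ only.

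For the right part, I would analyse $s_{\texttt{01203}}$. Since no $f$-image ends in \texttt{0} or \texttt{01}, the factor \texttt{012} of $f^\omega(\texttt{0})$ occurs only as the prefix of the $f$-image of some letter, and a direct check shows that the fourth letter of each $f$-image is \texttt{0} or \texttt{4}; hence the letter following \texttt{012} in $f^\omega(\texttt{0})$ lies in $\acc{\texttt{0},\texttt{4}}$. Only $g(\texttt{0})$ and $g(\texttt{2})$ begin with \texttt{abcdeac}, so the letter in question must be \texttt{0}, and $g(\texttt{012}) \cdot \texttt{abcdeac}$ is a prefix of $g(\texttt{0120})$ only. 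Since \texttt{0120} is a prefix of $f(\texttt{0})$, the recursive extension used in the previous lemmas gives $s_{\texttt{01203}}$ as a prefix of $g(f^d(\texttt{0120}))$ only.

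Combining the two, $T_{\texttt{01203}}$ is a factor of $g(f^d(\texttt{03}) \cdot f^d(\texttt{0120})) = g(f^d(\texttt{030120}))$, and the proof reduces to showing that \texttt{030120} is not a factor of $f^\omega(\texttt{0})$; this is the main obstacle. I would argue that \texttt{030} can occur in $f^\omega(\texttt{0})$ only when \texttt{03} is the suffix of $f(\texttt{0})$ (the only $f$-image ending in \texttt{03}), followed by the start of the next $f$-image. An occurrence of \texttt{030120} therefore requires the next $f$-image to start with \texttt{0120}, leaving only $f(\texttt{0})$ or $f(\texttt{2})$. At the level above, this would mean that the letter \texttt{0} is followed by \texttt{0} or \texttt{2}; but inspecting the five $f$-images shows that \texttt{0} in $f^\omega(\texttt{0})$ is only ever followed by \texttt{1} or \texttt{3}, giving the desired contradiction.
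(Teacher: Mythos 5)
Your proof is correct and follows essentially the same route as the paper's: force $p_{\texttt{01203}}$ to be a suffix of $g(f^d(\texttt{03}))$ and $s_{\texttt{01203}}$ to be a prefix of $g(f^d(\texttt{0120}))$, so that $T_{\texttt{01203}}$ would require the non-factor \texttt{030120} in $f^\omega(\texttt{0})$. In fact your version reads as the corrected form of the paper's argument (whose text for this lemma contains several letter-level typos, e.g.\ ``$\texttt{d}.g(\texttt{2})$ is a suffix of $g(\texttt{12})$'' and ``\texttt{0120} is a prefix of $f(\texttt{1})$''), and you additionally supply the short verification that \texttt{030120} is not a factor, which the paper only asserts.
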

\begin{proof}
Let us assume that $T_{\texttt{01203}}$ appears in $w_5$.\\
\noindent
% 15324 \\ 12345 \\ cadeb \\ abcde
% 01234 \\ 03241 \\ abcde \\ beacd
The letter \texttt{3} in $f^\omega(\texttt{0})$ appears after either \texttt{0} or \texttt{2}.
however \texttt{d} is a suffix of $g(\texttt{0})$ and not of $g(\texttt{2})$. 
Therefore $\texttt{d}.g(\texttt{2})$ is a suffix of $g(\texttt{12})$ only.
Since \texttt{12} is a suffix of $f(\texttt{1})$ and not of $f(\texttt{3})$, then $g(\texttt{12}f(\texttt{2}))$ is a suffix of $g(f(\texttt{12}))$ only.
Using this argument recursively, $p_{\texttt{01203}}$ is a suffix of $g(f^d(\texttt{12}))$ only.

Now, \texttt{012} in $f^\omega(\texttt{0})$ appears before either \texttt{1} or \texttt{4},
however \texttt{abcdeac} is only a prefix of $g(\texttt{1})$ and not of $g(\texttt{4})$.
Thus $g(\texttt{012}).\texttt{abcdeac}$ is a prefix of $g(\texttt{0120})$ only.
Since \texttt{0120} is a prefix of $f(\texttt{1})$ and not of $f(\texttt{4})$, then $g(f(\texttt{012})\texttt{0120})$ is a prefix of $g(f(\texttt{0120}))$ only.
Using this argument recursively,  $s_{\texttt{01203}}$ is a prefix of $g(f^d(\texttt{0120}))$.
Thus $T_{\texttt{01203}}$ is a factor of $g(f^d(\texttt{030120}))$ but \texttt{030120} is not a factor of $f^\omega(\texttt{0})$, contradiction.
\end{proof}


\begin{thebibliography}{10}
\bibitem{circular} G. Gamard, P. Ochem, G. Richomme, and P. S\'e\'ebold.
Avoidability of circular formulas.
\emph{Theoret. Comput. Sci} {\bf 726} (2018), 1--4.

\bibitem{BellMadill} J.P. Bell and B.W. Madill.
Iterative Algebras.
\emph{Algebr. Represent. Theor.} {\bf 18(6)} (2015), 1533--1546.
\url{https://doi.org/10.1007/s10468-015-9550-y}
\end{thebibliography}
\end{document}